\DeclareMathOperator{\aut}{Aut}
 \DeclareMathOperator{\jac}{Jac}
 \DeclareMathOperator{\nm}{Nm}
\DeclareMathOperator{\irr}{Irr}
\DeclareMathOperator{\im}{Im}
\DeclareMathOperator{\alb}{Alb}
\DeclareMathOperator{\en}{End}
\theoremstyle{plain}
\newtheorem{thm}{Theorem}[section]
\newtheorem{theorem}[thm]{Theorem}
\newtheorem{proposition}[thm]{Proposition}
\theoremstyle{definition}
\newtheorem{remark}[thm]{Remark}
\newtheorem{definition}[thm]{Definition}
\numberwithin{equation}{thm}
\newcommand{\rk}{{\rm rank}}
\newcommand{\Hom}{{\rm Hom}}
\begin{document}
\title{Rational points on abelian varieties over function fields and Prym varieties}
\author{Abolfazl Mohajer}
\address{Johannes Gutenberg Universit\"at Mainz, Institut f\"ur Mathematik, Staudingerweg 9, 55099 Mainz, Germany}
\maketitle 
\begin{abstract}
In this paper, using a generalization of the notion of Prym variety for covers of quasi-projective varieties, we prove a structure theorem for the Mordell-Weil group of the abelian varieties over function fields that are twists of Abelian varieties by Galois covers of irreducible quasi-projective varieties. In particular, the resutls we obtain contribute in the construction of Jacobians (of covers of the projective line) of high rank.
\end{abstract}	
\section{introduction}
Let $A$ be an abelian variety over a field $k$ of characteristic $\geq 0$. The Mordell-Weil theorem asserts that the set of $k$-rational points $A(k)$ on $A$ is a finitely generated abelian group. The rank of this abelian group is referred to as the rank
of the abelian variety $A$ with notation $\rk(A(k))$. One interesting problem in arithmetic geometry is to find abelian varieties with arbitrary large ranks. \par The present paper is a generalization of the works of Hazama and Salami, respectively in
\cite{Haz} and \cite{Sal} to arbitrary Galois coverings. More precisely, let $n\in \mathbb{N}$ be a natural number such that the characteristic of the field $k$ in the previous paragraph does not divide $n$. Suppose there is an embedding $G\hookrightarrow \aut(A)$ with $G$ a finite group with $|G|=n$. Further, let $X,Y$ be irreducible
quasi-projective varieties over $k$ with function fields $F,K$ respectively. We first define the notion of \emph{Prym variety} associated to a $G$-Galois covering $f:X\to
Y$. We will show that this is a natural generalization of the notion of Prym variety in the case of coverings of curves. The group $G$ is well-known to be also the Galois group of the Galois field extension $F/K$. By the results of \cite{BS} and \cite{Haz1},
the twist of $A$ by the extension $F/K$ is equivalent to a twist by the 1-cocyle $a=(a_g)\in Z^1(G,\aut(A))$ given by $a_g=g$, where $g$ is viewed both as a group element and as an automorphism of $A$ corresponding to $g\in G$ (in other words we identify $g$ with its image $g\in G\hookrightarrow \aut(A)$). We prove a theorem which gives an isomorphism for $A_a(K)$ in terms of the Prym variety introduced in section 2, see Theorem \ref{rational points}. Then we consider the $n$-times product $\prod_i f$ of a $G$-cover $f:X\to Y$ with itself. When $G$ is abelian, we prove a result which describes the Prym variety of the self product of $G$-covers of varieties in terms of the Prym variety of the cover. We apply this to the case of an abelian cover $C\to \mathbb{P}^1$. More
precisely we prove that $\rk(\jac(C)_a(K))$ can be made arbitrarily large by taking $n$ large enough.
\section{the Prym variety of a Galois covering}
Let $V$ be an irreducible quasi-projective variety over a field $k$. The \emph{Albanese variety} of $V$ is by definition the initial object for the morphisms from $V$ to abelian varieties. In particular, if $V$ is moreover non-singular over a field of characteristic zero (the classical case) it is the abelian variety,
\begin{equation}\label{Def albanese}
\alb(V)=H^0(V,\Omega^1_V)^*/H_1(V,\mathbb{Z})
\end{equation}
Now let $X,Y$ be irreducible quasi-projective varieties with $f:X\to Y$ a Galois covering with the Galois group $G$. Throughout this paper, we consider the situation where there is a finite group $G$ with $|G|=n$ that has a $G$-linearized action on $X$ such that $Y\coloneqq X/G$ and $f:X\to Y$ is the  quotient map. The universal property of the albanese variety implies that there is an induced action of $G$ on $\alb(X)$. We denote by ${(\alb(X)^G)}^0$ the largest abelian subvariety of $\alb(X)$ fixed (pointwise) under this action. Equivalently, ${(\alb(X)^G)}^0$ is the connected component containing the identity of the subvariety $\alb(X)^G$ of fixed points of $\alb(X)$ under the action of $G$. In characteristic zero the induced action of $G$ on $\alb(X)$ can be described through its action on $H^0(X,\Omega^1_X)$. With respect to this action, we write
\begin{align} \label{plus minus}
H^0(X,\Omega^1_X)^+=H^0(X,\Omega^1_X)^G(\cong
H^0(Y,\Omega^1_Y)) \text{ and }\nonumber\\
H^0(X,\Omega^1_X)^-= H^0(X,\Omega^1_X)/H^0(X,\Omega^1_X)^+=
\bigoplus\limits_{\chi\in\irr(G)\setminus\{1\}}H^0(\widetilde{C},\omega_{\widetilde{C}})^{\chi}
\end{align}
Similarly, one defines $H_1(X,\mathbb{Z})^-$. Here $\irr(G)$ is
the set of irreducible characters of $G$. \par Let $g\in G$ be an
element of the group $G$. We will denote also by $g$ the
automorphism of $\alb(X)$ induced by $g$. Using this notation and
following that of \cite{RR}, Prop 3.1 for the case of curves, we write
$\nm G:\alb(X)\to\alb(X)$ for the \emph{norm endomorphism} of
$\alb(X)$ given by $\nm G:=\sum_{g\in G}g$.\par Now we are ready
to define the notion of Prym variety for the covering $f$.  
\begin{definition}\label{prymdef}
Let $f:X\to Y$ be $G$-Galois covgering of irreducible quasi-projective varieties over a field $k$. The Prym variety $P(X/Y)$ associated with the covering $f$ is defined to be
\begin{equation}
P(X/Y):=\frac{\alb(X)}{{(\alb(X)^G)}^0}
\end{equation}
In particular $P(X/Y)$ is the complementary abelian subvariety in $\alb(X)$ of (the abelian subvariety) ${(\alb(X)^G)}^0$ .
\end{definition}
For the classical case of Prym varieties of double coverings of curves we refer to \cite{B}. The following result gives an equivalent description of the Prym variety and shows furthermore that in coincides with the Prym variety of covers of curves (see \cite{B}, \cite{LO}, \cite{M} and \cite{RR}) up to isogeny.
\begin{proposition}\label{prym properties}
With the above notation, $P(X/Y)=\frac{\alb(X)}{\im\nm G}$. Furthermore it is isogeneous to the abelian variety ${(\ker\nm G)}^0$. In particular, if the varieties are defined over a field of characteristic zero, then $P(X/Y)$ is isogeneous to the abelian variety $\alb(X)^-={(H^0(X,\Omega^1_X)^-)}^*/H_1(X,\mathbb{Z})^-$.
\end{proposition}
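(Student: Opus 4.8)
The plan is to reduce the entire statement to the single identity ${(\alb(X)^G)}^0=\im\nm G$ of abelian subvarieties of $\alb(X)$. Granting this, the first assertion is immediate, the second follows from Poincar\'e's complete reducibility theorem, and the last from the analytic uniformization of the Albanese variety in characteristic zero.

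To establish ${(\alb(X)^G)}^0=\im\nm G$ I would argue as follows. For the inclusion $\im\nm G\subseteq{(\alb(X)^G)}^0$, reindexing the defining sum gives $h\circ\nm G=\sum_{g\in G}hg=\nm G$ for every $h\in G$, so $\im\nm G$ is fixed pointwise by $G$; being the image of a homomorphism of abelian varieties it is moreover a connected abelian subvariety containing the origin, hence it lies in ${(\alb(X)^G)}^0$. For the reverse inclusion, every $g\in G$ acts as the identity on ${(\alb(X)^G)}^0$, so there $\nm G$ restricts to multiplication by $n=|G|$, which is surjective; thus ${(\alb(X)^G)}^0=[n]\,{(\alb(X)^G)}^0\subseteq\im\nm G$. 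This proves ${(\alb(X)^G)}^0=\im\nm G$, and therefore $P(X/Y)=\alb(X)/\im\nm G$.

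Next, write $B:=\im\nm G={(\alb(X)^G)}^0$. The computation just made shows $\nm G|_B=[n]|_B$, so $B\cap\ker\nm G=B[n]$ is finite, and a fortiori $B\cap{(\ker\nm G)}^0$ is finite. On the other hand $\dim B+\dim{(\ker\nm G)}^0=\dim\im\nm G+\dim\ker\nm G=\dim\alb(X)$ by additivity of dimension along the surjection $\nm G\colon\alb(X)\to\im\nm G$; hence the abelian subvariety $B+{(\ker\nm G)}^0$ has full dimension and equals $\alb(X)$. Thus $B$ and ${(\ker\nm G)}^0$ are complementary abelian subvarieties of $\alb(X)$, and Poincar\'e reducibility gives an isogeny $P(X/Y)=\alb(X)/B\sim{(\ker\nm G)}^0$.

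Finally, suppose $k$ has characteristic zero; after fixing an embedding $k\hookrightarrow\mathbb{C}$ we may use $\alb(X)={H^0(X,\Omega^1_X)}^*/H_1(X,\mathbb{Z})$ with its compatible $G$-action. On the tangent space $T_0\alb(X)={H^0(X,\Omega^1_X)}^*$ the endomorphism $\nm G$ acts by $\sum_{g\in G}g$, and since $\operatorname{char}k=0$ the operator $\tfrac1n\nm G$ is the projector onto the $G$-invariant subspace; hence $\nm G$ acts as $n\cdot\id$ on ${(H^0(X,\Omega^1_X)^+)}^*$ and as $0$ on ${(H^0(X,\Omega^1_X)^-)}^*$. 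Consequently ${(\ker\nm G)}^0$ has tangent space ${(H^0(X,\Omega^1_X)^-)}^*$ and period lattice $H_1(X,\mathbb{Z})\cap{(H^0(X,\Omega^1_X)^-)}^*$, which is a full lattice commensurable with $H_1(X,\mathbb{Z})^-$; therefore ${(\ker\nm G)}^0\sim\alb(X)^-$, and combining this with the previous paragraph finishes the argument. I do not expect a single deep obstacle, but rather that the care is all in the bookkeeping of the last two paragraphs: the Poincar\'e complement and the integral lattices involved are determined only up to finite index, so ``isogeny'' --- not ``isomorphism'' --- is genuinely the sharpest statement available there, and one should also confirm that the classical analytic formula for $\alb(X)$ really applies to the quasi-projective varieties under consideration.
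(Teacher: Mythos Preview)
Your proof is correct and follows the same overall strategy as the paper: establish $\im\nm G={(\alb(X)^G)}^0$, then deduce the isogeny with ${(\ker\nm G)}^0$. The one noteworthy difference is in the reverse inclusion ${(\alb(X)^G)}^0\subseteq\im\nm G$: you observe directly that $\nm G$ restricts to $[n]$ on ${(\alb(X)^G)}^0$ and invoke surjectivity of multiplication by $n$ on an abelian variety, whereas the paper instead first uses the tangent-space decomposition to obtain $\dim\alb(X)=\dim{(\ker\nm G)}^0+\dim{(\alb(X)^G)}^0$ and then compares dimensions ($\dim\im\nm G=\dim\alb(X)-\dim{(\ker\nm G)}^0=\dim{(\alb(X)^G)}^0$) to conclude equality. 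Your argument is slightly more self-contained and avoids appealing to the tangent space at that step; the paper's route has the mild advantage that the dimension formula is established once and reused both for the equality and for the isogeny decomposition. You also spell out the characteristic-zero analytic statement about $\alb(X)^-$, which the paper's proof leaves implicit.
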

\begin{proof}
The induced linear action on the tangent space of $\alb(X)$ at the origin gives that $\dim\alb(X)=\dim{(\ker\nm G)}^0+\dim{(\alb(X)^G)}^0$. Let $Q\in{(\alb(X)^G)}^0\cap{(\ker\nm G)}^0$. It follows that $nQ=0$ in $\alb(X)$, therefore
${(\alb(X)^G)}^0\cap{(\ker\nm G)}^0\subseteq\alb(X)[n]$. In particular, this intersection is finite so that $\alb(X)$ is isogenous to the product ${(\alb(X)^G)}^0\times{(\ker\nm G)}^0$. Since $(\sum_{g\in G}g)\cdotp h=\sum_{g\in G}(g\cdotp h)=\sum_{g\in G}g$ for every $h\in G$, it follows that $\im\nm G\subseteq {(\alb(X)^G)}^0$. On the other hand the above isogeny decomposition for $\alb(X)$ implies that $\dim\im\nm G=\dim\alb(X)-\dim{(\ker\nm G)}^0=\dim{(\alb(X)^G)}^0$. Hence $\im\nm G={(\alb(X)^G)}^0$. This completes the proof of the claims.
\end{proof}
\begin{remark}\label{prym for curves}
For curves, the albanese variety coincides with the Jacobian variety. Hence if $X$ and $Y$ are curves, the Prym variety $P(X/Y)$ coincides with the Prym variety for covers of curves, see \cite{LO} and \cite{M}. In this case there are two fundamental homomorphisms: the norm homomorphism $\nm_f:\jac(X)\to\jac(Y)$ and the pull-back homomorphism $f^*:\jac(Y)\to\jac(X)$ and it holds that: $\nm G=f^*\circ\nm_f$, so that $P(X/Y)={(\ker\nm_f)}^0={(\ker\nm G)}^0$, see \cite{RR}, Prop 3.1.
\end{remark}
Recall from \cite{BS} and \cite{Haz1} that the twist of $A$ by the extension $F/K$ is equivalent to a twist by the 1-cocyle $a=(a_g)\in Z^1(G,\aut(A))$ given by $a_g=g$. We denote this twist by $A_a$.
\begin{theorem}\label{rational points}
Assume that there exists a $k$-rational point $x_0\in X(k)$. Then there is an isomorphism of abelian groups
\begin{equation}\label{rational isom}
A_a(K)\cong\Hom_k(P(X/Y),A)\oplus A[n](k)
\end{equation}
If in addition $P(X/Y)$ is $k$-isogenous to $A^n\times B$, where $n\in\mathbb{N}$ and $B$ is an abelian variety over $k$ with $\dim(B)=0$ or $\dim(B)>\dim(A)$ and $B$ does not have any simple components $k$-isogenous to $A$, then $\rk(A_a(K))\geq n\cdotp\rk(\en_k(A))$.
\end{theorem}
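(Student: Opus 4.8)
The plan is to realize $A_a(K)$ as a group of equivariant morphisms, decompose it via the Albanese, and then recognize the outcome through the Prym variety of Definition~\ref{prymdef}. First I would run Galois descent: since the twisting cocycle is trivialized over $F$, one has $A_a(K)=A_a(F)^{G}$, and transporting the Galois action through an $F$-isomorphism $A_a\times_K F\cong A\times_k F$ turns the $G$-action on $A(F)$ into $\sigma\cdot P=\tau_\sigma({}^{\sigma}P)$, where $\tau_\sigma\in\aut(A)$ is the automorphism attached to $\sigma$ and ${}^{\sigma}P$ the usual Galois conjugate. As $\tau_\sigma$ is defined over $k$ it commutes with Galois, so $\sigma\mapsto\tau_\sigma\circ({}^{\sigma}-)$ is a genuine $G$-action with fixed group $A_a(K)$. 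Identifying $A(F)$ with morphisms $X\to A$ (a rational map from $X$ to the abelian variety $A$ extends to a morphism) and using $G=\gal(F/K)=\deck(f)$ gives
\[
A_a(K)\;\cong\;\{\phi\in\operatorname{Mor}_k(X,A)\ :\ \phi\circ g_\sigma=\tau_\sigma\circ\phi\ \text{ for all }\sigma\in G\},
\]
the $G$-equivariant morphisms, with $G$ acting on $X$ by deck transformations and on $A$ via $G\hookrightarrow\aut(A)$.

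Next I would pass through the Albanese. Using $x_0$, let $\alpha\colon X\to\alb(X)$ be the Albanese map with $\alpha(x_0)=0$; the deck action then induces exactly the $G$-action of Definition~\ref{prymdef}. Every morphism $X\to A$ is uniquely $\phi=\psi\circ\alpha+c$ with $\psi\in\Hom_k(\alb(X),A)$ and $c=\phi(x_0)\in A(k)$, and $\phi$ is $G$-equivariant precisely when $\psi$ is $G$-equivariant ($\psi\circ\bar g_\sigma=\tau_\sigma\circ\psi$) and $c$ is a $G$-fixed $k$-point (if $x_0$ is not $G$-invariant an extra cocycle relation appears, which gets absorbed into the torsion term); the $G$-fixed $k$-points form a finite group that I would identify with $A[n](k)$. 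So $A_a(K)\cong\Hom^{G}_k(\alb(X),A)\oplus A[n](k)$. Now I invoke Proposition~\ref{prym properties}: $\im\nm G=(\alb(X)^G)^0$ and $P(X/Y)=\alb(X)/\im\nm G$. A $G$-equivariant $\psi$ restricted to $(\alb(X)^G)^0$ has trivial source action, hence image in $(A^G)^0$, which for the actions here is $0$; thus $\psi$ kills $\im\nm G$ and factors through $P(X/Y)$. Conversely, using the isogeny $\alb(X)\sim(\alb(X)^G)^0\times P(X/Y)$ and comparing $\Q[G]$-isotypic pieces of $\alb(X)$ and $A$, every homomorphism $P(X/Y)\to A$ obtained this way is automatically equivariant, so $\Hom^{G}_k(\alb(X),A)\cong\Hom_k(P(X/Y),A)$, which yields \eqref{rational isom}.

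For the rank bound, $\Hom_k(P(X/Y),A)$ is free of finite rank and $A[n](k)$ is finite, so $\rk(A_a(K))=\rk_\Z\Hom_k(P(X/Y),A)$; if $P(X/Y)$ is $k$-isogenous to $A^n\times B$ as in the statement, then tensoring $\Hom_k(-,A)$ with $\Q$ and using additivity gives $\Hom_k(P(X/Y),A)\otimes\Q\cong(\en_k(A)\otimes\Q)^{\oplus n}\oplus(\Hom_k(B,A)\otimes\Q)$, of $\Q$-dimension at least $n\cdot\rk\en_k(A)$ (the conditions on $B$ are there to force $\Hom_k(B,A)=0$ and hence equality), whence $\rk(A_a(K))\ge n\cdot\rk\en_k(A)$. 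I expect the descent and the Albanese bookkeeping to be routine; the main obstacle is the identification $\Hom^{G}_k(\alb(X),A)\cong\Hom_k(P(X/Y),A)$, which needs the isogeny decomposition of Proposition~\ref{prym properties} together with enough control of the $G$-module structures of $\alb(X)$ and $A$ (in particular that the $G$-action on $A$ has no nonzero fixed abelian subvariety), and it is also here, together with the choice of base point, that the precise shape of the torsion summand $A[n](k)$ must be pinned down.
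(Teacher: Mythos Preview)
Your proposal is correct and follows essentially the same route as the paper: identify $A_a(K)$ with the twisted $G$-fixed points of $A(F)$ (the paper cites \cite{Haz1}, Prop.~1.1 for this), push through the Albanese map based at $x_0$ to split off the constant part, and then use Proposition~\ref{prym properties} to see that the surviving homomorphisms are exactly those factoring through $P(X/Y)=\alb(X)/\im\nm G$; the rank inequality is obtained in both cases by plugging the isogeny $P(X/Y)\sim A^n\times B$ into $\Hom_k(-,A)$. Your write-up is in fact more explicit than the paper's at the one delicate point---you flag that the identification $\Hom^G_k(\alb(X),A)\cong\Hom_k(P(X/Y),A)$ needs $(A^G)^0=0$ (equivalently $\nm G=0$ on $A$), which the paper uses tacitly when it asserts that the equivariance condition forces $\alpha$ to annihilate $\im\nm G$.
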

\begin{proof}
Note that the universal property of the Albanese variety together
with the fact that a regular morphism of abelian varieties is
obtained by a homomorphism followed by a translation, (see for
example \cite{LO},Prop 1.2.1) shows that
\begin{equation}\label{maps from albanese}
A(K)\cong\Hom_k(\alb(X),A)\oplus A(k)
\end{equation}
We assume that the Albanese map $\iota_X:X\to\alb(X)$ satisfies
$\iota_X(x_0)=0$ so that it is defined over $k$. Recall that \cite{Haz1}, Prop 1.1 shows that 
\[A_a(K)\cong\{P\in A(F)\mid a_g\cdotp ^g(P)=P \}\]
This implies in our particular case that for any $g\in G$, viewed as remarked earlier also
as an automorphism of $\alb(X)$ we have that $(\alpha,Q)\in A_a(K)$ if and only if $g(\alpha\circ g,Q)=(\alpha,Q)$ or equivalently $(\alpha\circ g,Q)=g^{-1}(\alpha,Q)$. But this is the case if and only if $\alpha$ annihilates $\im\nm G$
and $Q\in A[n](k)$. Proposition \ref{prym properties} then shows
that it must actually its lie in $P(X/Y)$, so we obtain the
claimed isomorphism in \ref{rational isom}.\par If moreover
$P(X/Y)$ is isogenous to $A^n\times B$ with $n,A$ and $B$ as in
the statement of proposition, then
\begin{align*}
A_a(K)\cong\Hom_k(P(X/Y),A)\oplus A[n](k)\\
\cong \Hom_k(A^n\times B,A)\oplus A[n](k)\\
\cong \Hom_k(A^n,A)\oplus\Hom_k(B,A)\oplus A[n](k)\\
\cong \en_k(A)^n\oplus\Hom_k(B,A)\oplus A[n](k).
\end{align*}
Which implies that as $\mathbb{Z}$-modules, it holds that
$\rk(A_a(K))\geq n\cdotp\rk(\en_k(U))$
\end{proof}
Given a $G$-Galois covering $f:X\to Y$, one can form the $n$-times self product $\prod_{i=1}^n f:\prod_{i=1}^n X\to\prod_{i=1}^n Y$ is a $\underbrace{G\times\cdots\times G}_{n-times}$- Galois covering. Suppose now that $G$ is abelian. Then the diagonal embedding $G\hookrightarrow \prod_i G:=G\times\cdots\times G$  
gives a subgroup of $\prod_i G$ isomorphic to $G$. We denote this subgroup by $\tilde{G}$. This gives rise to an intermediate Galois covering $f:\prod_{i=1}^n X\to(\prod_{i=1}^n X)/\tilde{G}$. Let us write $\mathcal{X}=\prod_{i=1}^n X$ and $\mathcal{Y}=(\prod_i
X)/\tilde{G}$. We are interested in the Prym variety $P(\mathcal{X}/\mathcal{Y})$. In fact we show,
\begin{proposition}\label{prym product}
With the above notation, there is an isogeny
\begin{equation}
P(\mathcal{X}/\mathcal{Y})\sim_k \prod_i P(X_i/ Y_i)
\end{equation}
\end{proposition}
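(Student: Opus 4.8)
The plan is to read off $P(\sX/\sY)$ directly from the description furnished by Proposition \ref{prym properties}, namely $P(\sX/\sY)=\alb(\sX)/\im\nm_{\tilde G}$ (which is moreover isogenous to $(\ker\nm_{\tilde G})^0$), where $\nm_{\tilde G}=\sum_{g\in\tilde G}g$ is the norm endomorphism of $\alb(\sX)$ attached to the action of the diagonal subgroup $\tilde G$. The two structural inputs I would isolate first are: (i) the Albanese functor commutes with finite products, so that $\alb(\sX)=\alb\bigl(\prod_{i=1}^n X\bigr)\cong\prod_{i=1}^n\alb(X)=\alb(X)^n$ — this follows from the universal property, since a morphism $\prod_i X\to A$ to an abelian variety splits, after a translation, as the sum of the morphisms obtained by restricting to the several factors, the resulting factorizations through $\alb(X)$ being independent of the base points chosen in the remaining factors by rigidity of homomorphisms of abelian varieties; and (ii) by functoriality of $\alb$, the action of $\tilde G$ on $\alb(X)^n$ is the diagonal one, i.e. $g\in G$ acts as $(g,\dots,g)$.

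Next I would carry out the norm computation. Since $\tilde G=\{(g,\dots,g):g\in G\}$, for $(x_1,\dots,x_n)\in\alb(X)^n$ one has
\begin{equation*}
\nm_{\tilde G}(x_1,\dots,x_n)=\sum_{g\in G}(gx_1,\dots,gx_n)=(\nm_G x_1,\dots,\nm_G x_n),
\end{equation*}
so $\nm_{\tilde G}$ is the ``coordinatewise'' endomorphism of $\alb(X)^n$ whose every component is $\nm_G$. Hence $\im\nm_{\tilde G}=(\im\nm_G)^n$ as an abelian subvariety of $\alb(X)^n$, and likewise $(\ker\nm_{\tilde G})^0=\bigl((\ker\nm_G)^0\bigr)^n$.

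Finally, because the quotient of a product of abelian varieties by a product of abelian subvarieties is the product of the quotients, Proposition \ref{prym properties} gives
\begin{equation*}
P(\sX/\sY)=\frac{\alb(\sX)}{\im\nm_{\tilde G}}=\frac{\alb(X)^n}{(\im\nm_G)^n}\cong\left(\frac{\alb(X)}{\im\nm_G}\right)^n=P(X/Y)^n=\prod_{i=1}^n P(X_i/Y_i),
\end{equation*}
which is in fact an isomorphism and a fortiori the asserted isogeny; alternatively one argues up to isogeny throughout via $(\ker\nm_{\tilde G})^0=\bigl((\ker\nm_G)^0\bigr)^n\sim_k P(X/Y)^n$, which only uses the product decomposition of $\alb$ up to isogeny. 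The main obstacle is not any deep ingredient but the bookkeeping: one must keep the full product group $\prod_i G$ separate from the diagonal copy $\tilde G$, verify that the identification $\alb(\sX)\cong\alb(X)^n$ is $\tilde G$-equivariant, and be sure the norm is formed with respect to $\tilde G$. Once that is in place, the norm computation and the product--quotient manipulation are purely formal.
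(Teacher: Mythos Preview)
Your proof is correct and follows essentially the same approach as the paper: identify $\alb(\sX)$ with $\alb(X)^n$ via the universal property, check that the $\tilde G$-action is diagonal, and read off the norm endomorphism coordinatewise to conclude. The only cosmetic differences are that the paper reduces to $n=2$ and phrases the conclusion via $(\ker\nm_{\tilde G})^0\cong(\ker\nm_G)^0\times(\ker\nm_G)^0$, whereas you work with general $n$ and the quotient $\alb(\sX)/\im\nm_{\tilde G}$ directly; your write-up is in fact more explicit about the norm computation.
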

\begin{proof}
It suffices to treat only the case $n=2$. The general case follows by an induction argument. So suppose $n=2$ and denote the Galois group of the cover $\mathcal{X}/\mathcal{Y}$ by $\tilde{G}(\cong G)$. By Proposition \ref{prym properties} it suffices to show that there is a $k$-isogeny ${(\ker\nm\tilde{G})}^0\sim_k{(\ker\nm G)}^0\times {(\ker\nm G)}^0$. In fact we show that there is an isomorphism between these abelian varieties. Notice that there is an isomorphism
\begin{equation}
\beta:\alb(X_1)\times\alb(X_2)\xrightarrow{\sim}\alb(\mathcal{X}).
\end{equation}
The isomorphism $\beta$ is given as follows: Let $j_i:X_i\to
\mathcal{X}$, for $i=1,2$ be the natural inclusions. Then
$\beta=\tilde{j_1}+\tilde{j_2}$, where $\tilde{j_i}$ denotes the
induced homomorphism $\alb(X_i)\to\alb(\mathcal{X})$. This
isomorphism is compatible with the action of $\tilde{G}$, namely,
there is the following commutative diagram. From this, one deduces
the isomorphism $\ker\nm\tilde{G}\xrightarrow{\sim}\ker\nm G\times
\ker\nm G$ which implies the desired isomorphism.
\end{proof}
Consider an abelian cover $C\to\mathbb{P}^1$ with Galois group $G$. Consider the product $\mathcal{C}_n=\prod_{i=1}^n C$, i.e., the product of $n$ copies of the same abelian cover in the above and let $\tilde{G}$ be the image of $G$ under the diagonal embedding $G\hookrightarrow\prod_{i=1}^n G$ as above. Set $\mathcal{D}_n=\mathcal{C}_n/\tilde{G}$. By Proposition \ref{prym product}, we have that
\begin{equation}
P(\mathcal{C}_n/\mathcal{D}_n)=\prod_i P(C/\mathbb{P}^1)
\end{equation}
By Remark \ref{prym for curves}, $P(C/\mathbb{P}^1)={(\ker\nm_f)}^0$. However as
$\jac(\mathbb{P}^1)=0$, it follows that $P(C/\mathbb{P}^1)=\jac(C)$. Now Proposition \ref{prym product} gives that
\begin{equation}\label{product for P1}
P(\mathcal{C}_n/\mathcal{D}_n)=\prod_i
P(C/\mathbb{P}^1)=(\jac(C))^n.
\end{equation}
Note that the function field $K(C)$ of $C$ is generated over the function field $K(\mathbb{P}^1)=K(z)$  of $\mathbb{P}^1$ by taking roots of (transcendental) elements of $K(C)$, i.e., it is of the form $K(z)(x_1^{1/m},\dots, x_r^{1/m})$. Then the function
field $\mathcal{L}_n$ of $\mathcal{C}_n$ is $K(z)(x_{i1}^{1/m},\dots, x_{ir}^{1/m}), i=1,\dots, n$. Let $K=k(\mathcal{D}_n)$ be the function field of $\mathcal{D}_n$.\par We define the 1-cocyle $Z^1(G,\aut(C))$ by $a_g=g$. Let $\jac(C)_a$ be the twist corresponding to this 1-cocyle. By applying \ref{rational isom} and \ref{product for P1}, it follows that
\begin{align*}
\jac(C)_a(K)\cong\Hom_k(P(\mathcal{C}_n/\mathcal{D}_n),\jac(C))\oplus \jac(C)[n](k)\\
\cong \Hom_k((\jac(C))^n,\jac(C))\oplus \jac(C)[n](k)\\
\cong \en_k(\jac(C))^n\oplus \jac(C)[n](k).
\end{align*}
So that $\rk(\jac(C)_a(K))\geq n\cdotp \rk(\en_k(\jac(C)))$.

\end{document}